\documentclass[12pt]{article}
\usepackage[letterpaper,margin=1in]{geometry}
\usepackage{mathrsfs,dsfont}

\textheight=230mm \textwidth=160mm
\topmargin=-1.5cm
\oddsidemargin=-0.0cm
\evensidemargin=-1cm
\parskip= 3.5pt

\usepackage{setspace}
\lineskiplimit = -0pt
\lineskip = 0pt

\usepackage{amsmath,amsthm,amssymb}
\usepackage{mathptmx}
\usepackage{hyperref}

\font\tencmmib=cmmib10 \skewchar\tencmmib '60
\newfam\cmmibfam
\textfont\cmmibfam=\tencmmib

\def\lessim{\ \lower4pt\hbox{$
		\buildrel{\displaystyle <}\over\sim$}\ }
\def\gessim{\ \lower4pt\hbox{$\buildrel{\displaystyle >}
		\over\sim$}\ }

\def\eps{{\varepsilon}}

\def\qed{\hfill\hbox{\rlap{$\sqcap$}$\sqcup$}}

\newcommand{\la}{\langle}
\newcommand{\ra}{\rangle}
\renewcommand{\geq}{\geqslant}
\renewcommand{\leq}{\leqslant}

\renewcommand{\le}{\leqslant}
\newcommand{\e}{\mathbb{E}}
\newcommand{\p}{\mathbb{P}}
\newcommand{\Reals}{\mathbb{R}}

\newtheorem{lemma}{\bf Lemma}

\newtheorem{theorem}{\bf Theorem}

\newtheorem{proposition}{\bf Proposition}

\newenvironment{Proof of lemma}{\noindent{\bf Proof of Lemma}}{\hfill$\Box$\newline}
\newenvironment{Proof of theorem}{\noindent{\it Proof of Theorem}}{\hfill\scriptsize{$\Box$}\newline}
\newenvironment{Proof of theorems}{\noindent{\bf Proof of Theorems}}{\hfill$\Box$\newline}
\newenvironment{Proof of proposition}{\noindent{\bf Proof of Proposition}}{\hfill$\Box$\newline}
\newenvironment{Proof of propositions}{\noindent{\bf Proof of Propositions}}{\hfill$\Box$\newline}
\newenvironment{Proof of exercise}{\noindent{\it Proof of Exercise:}}{\hfill$\Box$}

\title{Fluctuations of the free energy in the mixed $p$-spin models with external field}

\author{Wei-Kuo Chen\thanks{School of Mathematics, University of Minnesota. Email: wkchen@umn.edu. Research supported by NSF grant DMS-1513605, 
		NSF-Simon Travel Grant and Hong Kong Research Grants Council GRF-14302515.}
		\and
		Partha Dey\thanks{Department of Mathematics, University of Illionis at Urbana-Champaign. Email: psdey@illinois.edu.}
	\and
	Dmitry Panchenko\thanks{Department of Mathematics, University of Toronto. Email: panchenk@math.toronto.edu. Partially supported by NSERC grant.}
}

\date{}

\begin{document}
	\maketitle
\begin{abstract}
We show that the free energy in the mixed $p$-spin models of spin glasses does not superconcentrate in the presence of external field, which means that its variance is of the order suggested by the Poincar\'e inequality. This complements the result of Chatterjee who showed that the free energy superconcentrates when there is no external field. For models without odd $p$-spin interactions for $p\geq 3$, we prove the central limit theorem for the free energy at any temperature and give an explicit formula for the limiting variance. Although we only deal with the case of Ising spins, all our results can be extended to the spherical models as well.
\end{abstract}

	\section{Introduction}
	
	In~\cite{Chatt08, Chatt09, Chatt14} Chatterjee developed a theory that linked various phenomena -- superconcentration, chaos, and multiple valleys -- for general Gaussian fields and gave a number of examples of application (see also \cite{Chatt13, Ding13}). One of the examples related to spin glass models showed that the free energy in the Sherrington-Kirkpatrick model \cite{SK} without external field superconcentrates at any temperature, i.e., it has variance of a smaller order than the usual one suggested by the Poincar\'e inequality (see~\cite[Theorem~$1.5$]{Chatt09}, or~\cite[Sections~$6.3, 10.2$]{Chatt14}). Before that, this was known (in a stronger form) only at high temperature (see~\cite{ALR} or~\cite[Section 11.4]{Tal11}). Chatterjee's techniques can also be applied to show superconcentration in a more general class of mixed $p$-spin models without external field. In this paper we complement these results by showing that there is no superconcentration in the presence of external field and moreover, in the case of mixed even $p$-spin models with external field, we obtain a Gaussian central limit theorem for the free energy at any temperature.

	First, let us recall the definition of mixed $p$-spin models. Let $(\beta_p)_{p\geq 1}$ be a sequence of non-negative real numbers decreasing fast enough, for example, satisfying $\sum_{p\geq 1}2^p\beta_p^2<\infty$. The mixed $p$-spin Hamiltonian is defined as a linear combination
\begin{equation}
	H_N(\sigma):=\sum_{p\geq 2} \beta_p H_{N,p}(\sigma)+\sum_{i\leq N}(h+ \beta_1 g_i)\sigma_i, \quad \sigma\in \{\pm1\}^N,
\label{Ham}
\end{equation}
where the $p$-th term in the first sum
\begin{equation}
H_{N,p}(\sigma) := \frac{1}{N^{(p-1)/2}}\sum_{1\leq i_1,\ldots,i_{p}\leq N}g_{i_1,\ldots,i_{p}}\sigma_{i_1}\cdots\sigma_{i_{p}}
\end{equation}
is called the pure $p$-spin Hamiltonian, $(g_{i_1,\ldots,i_{p}})$ are i.i.d.~standard Gaussian for all $p\geq 1$ and  $(i_1,\ldots,i_{p})\in \{1,2,\ldots,N\}^{p}$, and $h\in \Reals$. The last sum in (\ref{Ham}) is called the external field term, where we separated the parameters into a non-random part $h$ and symmetric Gaussian part $\beta_1 g_i$. Consider the following quantities
\begin{equation}
Z_N := \sum_{\sigma} \exp H_N(\sigma),\quad
f_N := \log Z_N, \quad
F_N := \frac{f_N}{N}
\end{equation}
-- the partition function, random unscaled and scaled free energy. Our first result is the following.
	\begin{theorem}\label{thm1}
		If the external field term is present, i.e.,~$h^2 + \beta_1^2\neq 0,$ then		\begin{align}\label{thm:eq1}
		{cN}\leq \mathrm{Var}(f_N)\leq {CN}
		\end{align}
for some constants $ C>c>0$ independent of $N$.
	\end{theorem}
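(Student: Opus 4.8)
The plan is to prove the two inequalities separately; the upper bound is routine, and essentially all the content is in the lower bound.

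\textbf{Upper bound.} I would view $f_N$ as a (manifestly smooth, in fact analytic) function of the i.i.d.\ standard Gaussian family $\mathbf g=\bigl((g_i)_{i\le N},\,(g_{i_1\dots i_p})_{p\ge 2}\bigr)$ and apply the Gaussian Poincar\'e inequality. Differentiating directly gives $\partial_{g_i}f_N=\beta_1\langle\sigma_i\rangle$ and $\partial_{g_{i_1\dots i_p}}f_N=\beta_pN^{-(p-1)/2}\langle\sigma_{i_1}\cdots\sigma_{i_p}\rangle$, where $\langle\cdot\rangle$ is the Gibbs average, so with $R_{1,2}=\frac1N\sum_i\sigma^1_i\sigma^2_i$ the overlap of two independent replicas,
\[
\mathbb E\,\|\nabla f_N\|^2=\sum_{p\ge1}\frac{\beta_p^2}{N^{p-1}}\sum_{i_1,\dots,i_p}\mathbb E\langle\sigma_{i_1}\cdots\sigma_{i_p}\rangle^2
= N\sum_{p\ge1}\beta_p^2\,\mathbb E\langle R_{1,2}^{\,p}\rangle\ \le\ N\sum_{p\ge1}\beta_p^2\ =\ CN,
\]
which yields $\mathrm{Var}(f_N)\le CN$.

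\textbf{Lower bound: a one-sided (``reverse'') Poincar\'e inequality.} For the lower bound I would use that, by Parseval in the Hermite basis of $\mathbf g$ together with Gaussian integration by parts, the degree-$1$ and diagonal degree-$2$ Hermite coefficients of $f_N$ are $\mathbb E[\partial_uf_N]$ and $\tfrac12\mathbb E[\partial^2_{uu}f_N]$, whence
\[
\mathrm{Var}(f_N)\ \ge\ \bigl\|\mathbb E\,\nabla f_N\bigr\|^2\ +\ \tfrac12\sum_{u}\bigl(\mathbb E[\partial^2_{uu}f_N]\bigr)^2 ,
\]
the sum over all disorder coordinates $u$. (Equivalently: condition on all $p\ge2$ disorder, use that $f_N$ is convex in $(g_i)_{i\le N}$, and apply the Stein identities $\mathrm{Cov}(g,\varphi(g))=\mathbb E\varphi'(g)$ and $\mathrm{Cov}(g^2{-}1,\varphi(g))=\mathbb E\varphi''(g)$ together with Cauchy--Schwarz.) Since $\partial^2_{g_i}f_N=\beta_1^2(1-\langle\sigma_i\rangle^2)$, so that $\tfrac12\sum_i(\mathbb E\partial^2_{g_i}f_N)^2=\tfrac12\beta_1^4N\,(1-\mathbb E\langle R_{1,2}\rangle)^2$, and since $\sum_{i_1,\dots,i_p}\mathbb E\langle\sigma_{i_1}\cdots\sigma_{i_p}\rangle^2\ge \tfrac12 N^p\bigl(\mathbb E\langle\sigma_1\cdots\sigma_p\rangle\bigr)^2$ for $N$ large (keeping only the pairwise-distinct indices), one is reduced to showing that, uniformly in $N$, \emph{at least one} of the quantities $\beta_1^4\bigl(1-\mathbb E\langle R_{1,2}\rangle\bigr)^2$ or $\beta_p^2\bigl(\mathbb E\langle\sigma_1\cdots\sigma_p\rangle\bigr)^2$ (for some $p$ with $\beta_p\ne 0$; the model being non-degenerate) is bounded below by a positive constant.

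\textbf{The two cases.} If $\beta_1\ne 0$ I would use the exact identity $\langle\sigma_i\rangle=\tanh(\beta_1 g_i+c_i)$, where $c_i$ is independent of $g_i$ (it is half the difference of the two free energies obtained by pinning $\sigma_i=\pm1$), so that
\[
1-\mathbb E\langle R_{1,2}\rangle=\mathbb E\,\operatorname{sech}^2(\beta_1 g_i+c_i)\ \ge\ \mathbb E\, e^{-2\beta_1|g_i|}\cdot\mathbb E\, e^{-2|c_i|},
\]
which stays bounded away from $0$ because $\mathbb E|c_i|=O(1)$ uniformly in $N$ — a standard cavity estimate expressing that the two pinned free energies nearly cancel, i.e.\ that the Gibbs measure does not freeze at finite temperature. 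If $\beta_1=0$, then $h\ne0$, and I would instead show the magnetization density stays bounded below: $\partial_h\mathbb E f_N=\mathbb E\langle M\rangle$ with $M=\sum_i\sigma_i$, the map $h\mapsto\mathbb E f_N$ is convex and even in the limit, and the strict free-energy gap $\lim_N\bigl(\mathbb E F_N(h)-\mathbb E F_N(0)\bigr)>0$ forces $\liminf_N\mathbb E\langle\sigma_1\rangle>0$; then, by the decoupling of distant spins (from overlap concentration), $\mathbb E\langle\sigma_1\cdots\sigma_p\rangle$ converges to $\bigl(\lim_N\mathbb E\langle\sigma_1\rangle\bigr)^p>0$ for every $p$, in particular for one with $\beta_p\ne0$. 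In either case the reverse Poincar\'e bound is $\ge cN$.

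\textbf{Where the difficulty lies.} The soft ingredients — the Poincar\'e upper bound, the Hermite (or convexity$+$Stein) lower bound, and the derivative identities — are routine. The genuine obstacles are the two model-dependent facts in the last step: the uniform-in-$N$ control of the cavity field $c_i$ (non-freezing) when $\beta_1\ne0$, and the uniform positivity of the magnetization density when $h\ne0$, the latter resting on the strict free-energy gap between the models with and without external field.
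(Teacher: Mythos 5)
Your route is genuinely different from the paper's. The paper writes the whole variance via Chatterjee's covariance representation, $\mathrm{Var}(f_N)=N\int_0^1\e\langle\xi(R(\sigma,\tau))\rangle_t\,dt$, then further decouples the $p$-spin disorder while keeping the external fields correlated, and proves (via a two-system Guerra bound and properties of the Parisi measure) that the overlap under the decoupled measure concentrates at a strictly positive constant $u=\e\,\partial_x\Phi_{\mu_P}(0,h_1^1)\partial_x\Phi_{\mu_P}(0,h_1^2)$. You instead keep only the degree-one and diagonal degree-two Wiener--Hermite projections of $f_N$. Your $\beta_1\neq0$ branch is correct and, I think, genuinely more elementary than the paper's: the identity $\langle\sigma_i\rangle=\tanh(h+\beta_1 g_i+c_i)$ with $c_i=\frac12\log\bigl(\langle e^{Y}\rangle_0/\langle e^{-Y}\rangle_0\bigr)$ independent of $g_i$ is exact (the odd-in-$\sigma_i$ part $Y$ of the $p$-spin Hamiltonian involves a disjoint set of Gaussians from the even part), and $\e|c_i|\le \e\bigl(\log\langle e^{Y}\rangle_0+\log\langle e^{-Y}\rangle_0\bigr)\le\xi'(1)$ by Jensen and Fubini, so $1-\e\langle R_{1,2}\rangle\ge c>0$ uniformly with no Parisi input at all. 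That branch I would accept.

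The $\beta_1=0$, $h\neq0$ branch has a genuine gap. The claim that overlap concentration implies $\e\langle\sigma_1\cdots\sigma_p\rangle\to\bigl(\lim_N\e\langle\sigma_1\rangle\bigr)^p$ is false at low temperature: in the RSB phase the truncated correlations $\e\bigl[\langle\sigma_1\sigma_2\rangle-\langle\sigma_1\rangle\langle\sigma_2\rangle\bigr]$ do not vanish (equivalently, $\e\langle m^2\rangle\neq(\e\langle m\rangle)^2$ in the limit, where $m=N^{-1}\sum_i\sigma_i$), so the limit of $\e\langle\sigma_1\cdots\sigma_p\rangle$ is not a power of the magnetization. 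For \emph{even} $p$ you can rescue positivity without decoupling, since $\e\langle m^p\rangle\ge(\e\langle m\rangle)^p$ by two applications of Jensen and $\e\langle m^p\rangle=\e\langle\sigma_1\cdots\sigma_p\rangle+O(1/N)$; combined with the strict gap $\lim_N(\e F_N(h)-\e F_N(0))>0$ (which itself is not free --- it needs the strict monotonicity of the Parisi functional in $|h|$, e.g.\ via strict convexity of $\Phi_\mu(0,\cdot)$), this closes that sub-case. But for a model whose only interactions are odd $p\ge3$ terms with $\beta_1=0$ (say a pure $3$-spin model with nonzero $h$), neither decoupling nor Jensen gives you a lower bound on $|\e\langle\sigma_1\sigma_2\sigma_3\rangle|$, and this is exactly the point where the paper has to invoke its Proposition~1 (the coupled two-system Guerra bound with the Talagrand perturbation to handle odd spins). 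As written, your proof does not cover that case.
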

The proof is based on a version of Chatterjee's representation for the variance (see Lemma \ref{lem2} below)  and some consequences of the validity of the Parisi formula for the free energy. 

In the case when all odd $p\geq 3$ spin terms in the mixed $p$-spin model vanish, we have additional tools available from the theory of spin glasses, which will allow us to strengthen Theorem \ref{thm1} and prove the central limit  theorem for the free energy. 

The description of the variance of the limiting Gaussian distribution is explicit but quite complicated, and we need to recall several results and definitions first. Consider the following function
\begin{equation}
\xi(x):= \sum_{p\geq 1}\beta_p^2x^{p}.
\label{eta}
\end{equation}
The Parisi formula for the free energy \cite{Parisi79, Parisi80}, which was proved for mixed even $p$-spin spin models by Talagrand in \cite{Tal06} and for general mixed $p$-spin models in \cite{P13, P14} (see~\cite[Chapter 3]{SKmodel}), states that 
	\begin{equation}
	\lim_{N\rightarrow\infty}\e F_N=\min_{\mu}\Bigl(\log 2+ \e\Phi_{\mu}(0,h+\beta_1 g_1)-\frac{1}{2}\int_0^1\! \xi''(q)q\mu([0,q])\, dq\Bigr),
	\label{Parisif}
	\end{equation}
	where the minimum is taken over all probability measures $\mu$ on $[0,1]$ and $\Phi_{\mu}(q,x)$ for $q\in [0,1]$ and $x\in\Reals$ is the solution of the Parisi equation
\begin{equation}
	\partial_q\Phi_{\mu}=-\frac{\xi''(q)}{2}\Bigl(\partial_{xx}\Phi_{\mu}+\mu([0,q]) \bigl(\partial_x\Phi_{\mu}\bigr)^2\Bigr)
	\label{ParisiEq}
\end{equation}
with the boundary condition $\Phi_{\mu}(1,x)=\log \cosh x.$ It was proved in \cite{AChen14} (see also \cite{Jagannath}) that the Parisi variational formula has unique minimizer, which will be denoted by $\mu_P$. When $h^2 + \beta_1^2\neq 0$,~\cite[Theorem 14.12.1]{Tal11} proves that the support of the Parisi measure is separated from zero, i.e.,~$d:=\min\mbox{supp}\,\mu_P>0.$ Furthermore, by~\cite[Proposition 1]{Chen12} we have that
\begin{align}\label{eqd}
d&=\e \bigl(\partial_x\Phi_{\mu_P}(d,h+\chi)\bigr)^2,
\end{align}
where $\chi$ is a centered Gaussian random variable with variance $\xi'(d).$ Next, for any fixed $t\in (0,1)$, we consider the function
\begin{align}
\label{defphi}
\varphi_t(s):=\e \partial_x\Phi_{\mu_P}\bigl(d,h+\chi_t^1(s)\bigr)\partial_x\Phi_{\mu_P}\bigl(d, h+\chi_t^2(s)\bigr)
\end{align}
of $s\in [0,d]$, where the pair $(\chi_t^1(s),\chi_t^2(s))$ is centered Gaussian with the covariance given by
\begin{align*}
\e \chi_t^1(s)^2&=\e \chi_t^2(s)^2=\xi'(d),\quad \e \chi_t^1(s)\chi_t^2(s)=t\xi'(s).
\end{align*}
In the setting of mixed even $p$-spin models, it was shown in~\cite[Proposition~$1$]{Chen12} that $\varphi_t(s)$ has a unique fixed point in $[0,d]$, which we denote by $u_t$. Clearly, $u_{1}=d$. Finally, we define
\begin{align}
\label{eq4}
\nu :=\int_0^1\! \xi(u_t)\, dt.
\end{align}
This quantity is precisely the limiting scaled variance in our central limit theorem. Before stating the central limit theorem  let us recall that the \emph{total variation distance} between two r.v.s $X$ and $Y$ is defined as 
\[
d_{\mathrm{TV}}(X,Y):=\sup_{A}|\p(X\in A) - \p(Y\in A)|.
\]
\begin{theorem}\label{thm2}
Assume that $\beta_p=0$ whenever $p\geq 3$ is odd and the external field is present, i.e.,~$h^2+\beta_1^2\neq 0$. For $\nu$ defined in (\ref{eq4}), we have
\begin{equation}
\lim_{N\rightarrow\infty}d_{\mathrm{TV}}\Bigl(\frac{f_N-\e f_N}{\sqrt{\nu N}},g\Bigr)=0,
\label{CLTeq}
\end{equation}	
where $d_{\mathrm{TV}}$ is the total variation distance and $g$ is standard Gaussian.
\end{theorem}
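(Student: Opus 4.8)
The plan is to reduce the statement, via the variance representation of Lemma~\ref{lem2} and a single Gaussian integration by parts inside a Stein equation, to two asymptotic statements about the overlap of a coupled two--replica system, and then to establish those using the Parisi formula, the separation $d>0$, and concentration estimates available for even models. Write $\boldsymbol g$ for the whole family of disorder variables, let $\boldsymbol g'$ be an independent copy, and for $t\in[0,1]$ let $\langle\,\cdot\,\rangle_t$ denote the Gibbs average of the coupled system whose first replica $\sigma^1$ is drawn from the Gibbs measure with disorder $\boldsymbol g$ and whose second replica $\sigma^2$ is drawn from the Gibbs measure with disorder $t\boldsymbol g+\sqrt{1-t^2}\,\boldsymbol g'$; set $R_{1,2}=N^{-1}\sum_{i\le N}\sigma_i^1\sigma_i^2$. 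Since the cross--covariance of the two Hamiltonians equals $t N\,\xi(R_{1,2})$, differentiating $f_N$ in $\boldsymbol g$ and integrating by parts once (this is the content of Lemma~\ref{lem2}) gives
\begin{align}
\label{eq:VNrep}
\mathrm{Var}(f_N)=\e\,V_N,\qquad V_N:=N\int_0^1 \langle\xi(R_{1,2})\rangle_t\,dt ,
\end{align}
while the same computation carried inside a Stein equation yields $\e\bigl[(f_N-\e f_N)\,\psi(f_N)\bigr]=\e\bigl[\psi'(f_N)\,V_N\bigr]$ for smooth bounded $\psi$ with bounded derivatives, and hence the quantitative bound
\begin{align}
\label{eq:steinbd}
d_{\mathrm{TV}}\Bigl(\frac{f_N-\e f_N}{\sqrt{\e V_N}},\,g\Bigr)\;\le\;\frac{C\,\sqrt{\mathrm{Var}(V_N)}}{\e V_N}
\end{align}
with $C$ absolute. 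Since $\e V_N=\mathrm{Var}(f_N)>0$, it therefore suffices to prove $\e V_N=\nu N+o(N)$ with $\nu>0$ and $\mathrm{Var}(V_N)=o(N^2)$; then \eqref{eq:steinbd} gives \eqref{CLTeq} with $\sqrt{\nu N}$ in place of $\sqrt{\e V_N}$ by a routine Slutsky-type argument.

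\emph{Identification of the limit.} Fix $t\in(0,1)$. In the coupled measure $\langle\,\cdot\,\rangle_t$ the second replica feels only the fraction $t$ of the disorder felt by the first, while the first replica is governed by the Parisi measure $\mu_P$. Because the external field is present, $d=\min\mathrm{supp}\,\mu_P>0$ and the local magnetizations of each replica are asymptotically $\partial_x\Phi_{\mu_P}(d,h+\chi)$ with $\chi$ centered Gaussian of variance $\xi'(d)$. A cavity computation in the coupled system -- exactly the one behind the definition \eqref{defphi} -- then shows that $R_{1,2}$ under $\langle\,\cdot\,\rangle_t$ converges in probability to a value $s$ solving the self-consistency $s=\varphi_t(s)$; by \cite[Proposition~1]{Chen12} this fixed point is unique and equal to $u_t$. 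Since $\xi$ is real-analytic and $|R_{1,2}|\le 1$, this upgrades to $\e\langle\xi(R_{1,2})\rangle_t\to\xi(u_t)$, and, using the continuity of $t\mapsto u_t$ on $[0,1]$ (so that $u_t\to u_1=d$ as $t\uparrow1$) together with dominated convergence, $N^{-1}\e V_N=\int_0^1\e\langle\xi(R_{1,2})\rangle_t\,dt\to\int_0^1\xi(u_t)\,dt=\nu$ by \eqref{eq4}. One checks $\nu>0$ in every nontrivial case (some $\beta_p\neq0$ with $p\ge2$) because $d>0$ forces $u_t>0$; this in particular re-derives the lower bound of Theorem~\ref{thm1}.

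\emph{Concentration of $V_N$.} Since the integrand in \eqref{eq:VNrep} is bounded by $\xi(1)$, it is enough to show $V_N/N=\int_0^1\langle\xi(R_{1,2})\rangle_t\,dt\to\nu$ in probability; boundedness then promotes this to $L^2$, giving $\mathrm{Var}(V_N)=o(N^2)$, and by Fubini it suffices that $\langle\xi(R_{1,2})\rangle_t\to\xi(u_t)$ in probability for a.e.\ $t$. Existence of the (constant) limit comes from the convexity trick: adding a term $\lambda N\xi(R_{1,2})$ to the coupled Hamiltonian makes $\lambda\mapsto$ (coupled free energy) convex with derivative $\langle\xi(R_{1,2})\rangle_{t,\lambda}$, and Gaussian concentration of free energies (of order $N^{-1/2}$ at any temperature) forces a.s.\ convergence, hence self-averaging of $\langle\xi(R_{1,2})\rangle_{t,\lambda}$ for a.e.\ $\lambda$ and, with the additional input that the limiting coupled free energy is differentiable at $\lambda=0$, at $\lambda=0$ itself; the value of the limit is pinned to $\xi(u_t)$ by the cavity analysis of the previous paragraph, which rests on the Ghirlanda--Guerra identities -- available for the coupled even model -- and on the separation $d>0$. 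Feeding $\e V_N=\nu N+o(N)$ and $\mathrm{Var}(V_N)=o(N^2)$ into \eqref{eq:steinbd} completes the proof.

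\emph{Main obstacle.} The crux is the regime $t\uparrow1$: there the coupling degenerates, $u_t\to d$, and the two replicas nearly coincide, so one must quantify how the residual decoupling of strength $1-t^2$ still forces $R_{1,2}$ to concentrate at the correct value $u_t$, with estimates uniform enough to survive the integration over $t$ and to justify the dominated-convergence passages above. Carrying out the coupled-system cavity computation and controlling this boundary behaviour is where the hypothesis that all odd $p\ge3$ interactions vanish and the separation $d>0$ are used most heavily; away from $t=1$ the argument is the more standard even-model concentration-plus-cavity package.
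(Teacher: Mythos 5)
Your architecture is the same as the paper's: Stein's method combined with the Gaussian interpolation/covariance identity, with the error term controlled by concentration of the coupled overlap at $u_t$. Two points need attention. First, a technical one: the identity $\e\bigl[(f_N-\e f_N)\psi(f_N)\bigr]=\e\bigl[\psi'(f_N)V_N\bigr]$ is not exact as written. The interpolation formula produces $\int_0^1\e\bigl[\psi'(W_{N,t}^1)\,N\la\xi(R_{1,2})\ra_t\bigr]\,dt$, where $\psi'$ is evaluated at the free energy of the \emph{interpolated} system $H_t^1$, not at $f_N$. To get your form one must replace $V_N$ by a conditional expectation given the original disorder, as in Chatterjee's second-order Poincar\'e inequality, and then bound $\mathrm{Var}$ of that conditional expectation. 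The paper sidesteps this: it keeps $\e\,\psi'(W_{N,t}^1)\la\xi(R)-\xi(u_t)\ra_t$ as the error term, bounds it by $\|\psi'\|_\infty\,\e\la|\xi(R)-\xi(u_t)|\ra_t$, and uses $\e\psi'(W_{N,t}^1)=\e\psi'(W_N)$ only for the main term. This avoids any need for a separate variance bound on $V_N$. Your route is repairable but as stated the identity is wrong.

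The genuine gap is in the concentration step. Everything you need reduces to the statement that, for fixed $t\in(0,1)$, the overlap $R_{1,2}$ concentrates at $u_t$ under $\e\la\cdot\ra_t$. You propose to derive this from a ``cavity computation'' plus a ``convexity trick'' requiring differentiability of the limiting coupled free energy in the Lagrange parameter $\lambda$ at $\lambda=0$. That differentiability (equivalently, uniqueness of the limiting overlap value under the coupled measure) is essentially the statement to be proved, not an input one can assume; establishing it is the content of the disorder chaos theorem, \cite[Theorem 1]{Chen12}, which is a substantial result proved there by a two-dimensional Guerra--Talagrand bound exploiting evenness of $\xi$. The paper simply invokes that theorem (its inequality \eqref{eq2}), which gives exponential concentration $\e\la I(|R-u_t|\ge\eps)\ra_t\le Ke^{-N/K}$ for every fixed $t\in(0,1)$; this immediately bounds $\int_\delta^{1-\delta}\e\la|\xi(R)-\xi(u_t)|\ra_t\,dt$, and the endpoints $t$ near $0$ and $1$ are handled trivially by the crude bound $|\xi(R)-\xi(u_t)|\le 2\xi(1)$ and letting $\delta\downarrow 0$ \emph{after} $N\to\infty$ --- so the delicate $t\uparrow 1$ boundary analysis you flag as the main obstacle is not actually needed. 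To close your argument you should cite \cite[Theorem 1]{Chen12} for the concentration (noting, as the paper does, that its proof carries over to correlated external fields), rather than attempt to rederive it. A smaller point: $u_t>0$ does not follow from $d>0$ alone; it follows from $h^2+\beta_1^2\neq 0$ via the oddness and strict monotonicity of $\partial_x\Phi_{\mu_P}(d,\cdot)$, which makes $\varphi_t(0)>0$.
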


Let us explain right away why the condition $h^2+\beta_1^2 \not= 0$ implies that $u_t>0$ for all $t>0$ and, thus, $\nu> 0.$ By the definition of $u_t$ as the fixed point, it is enough to check that $\varphi_t(0)> 0$. For $s=0$, the covariance of $(\chi_t^1(0),\chi_t^2(0))$ above can be rewritten as
\begin{align*}
\e \chi_t^1(0)^2&=\e \chi_t^2(0)^2=\beta_1^2 + a^2\,\,\mbox{ and }\,\,\e \chi_t^1(0)\chi_t^2(0)=t \beta_1^2,
\end{align*}
where $a^2 = \xi'(d) - \beta_1^2 = \sum_{p\geq 2}p\beta_p^2 d^{p-1}.$ Therefore, we can define
$$
\chi_t^1(0) =  \beta_1 \sqrt{t} z +  + \sqrt{\beta_1^2(1-t) + a^2}\, g_1,\,\,
\chi_t^2(0) = \beta_1 \sqrt{t} z + \sqrt{\beta_1^2(1-t) + a^2}\, g_2
$$
for independent standard Gaussian random variables $z,g_1,g_2$ and, by (\ref{defphi}) we have
\begin{align*}
\varphi_t(0)&=\e \bigl(\e_1 \partial_x\Phi_{\mu_P}\bigl(d,h+\beta_1 \sqrt{t} z + \sqrt{\beta_1^2(1-t) + a^2}\, g_1\bigr)\bigr)^2,
\end{align*}
where $\e_1$ is the expectation in $g_1.$ It is well known that for any $q\in [0,1]$, $\Phi_{\mu}(q,\,\cdot\,)$ is symmetric and strictly convex (see e.g.~\cite[Lemma 14.7.16]{Tal11} and  \cite[Proposition~$2$]{AChen13}). Therefore, $\partial_x\Phi_{\mu_P}(d,\,\cdot\,)$ is odd and strictly increasing and the expectation $\e_1$ inside the square is not zero when $h+\beta_1\sqrt{t}z \not =0$, which happens with probability one when $h^2 + \beta_1^2 \not =0$ and $t>0$. This shows that $u_t> 0$ and $\nu>0.$

To prove Theorem \ref{thm2}, we adapt Stein's method to control the total variation distance for the free energy through a covariance formula. The crucial part of the argument is played by a recent result on the disorder chaos in the mixed even $p$-spin model with external field obtained in \cite{Chen12}, which allows us to gain control of the total variation distance and determine the exact value of $\nu$ in \eqref{eq4}. 

We remark that the approaches in Theorems \ref{thm1} and \ref{thm2} can be applied to the spherical version of the model as well, when $\sigma\in S^{N-1}$, with the spin glass computations for the Ising spins replaced by the corresponding results in \cite{Tal-sphere, Chen-sphere, Chen15} for the spherical case. In the spherical Sherrington-Kirkpatrick model without external field, it was shown by Baik and Lee in \cite{Baik15} that the free energy superconcentrates at high temperature with $\mbox{Var}(f_N)=\Theta(1)$, while at low temperature it superconcentrates with $\mathrm{Var}(f_N) = \Theta(N^{2/3})$ and the fluctuations around the limiting value of the free energy are given by the GOE Tracy-Widom distribution. In contrast, in the presence of external field our results show that the order of  fluctuations of the free energy is the same at any temperature and, for even $p$-spin models, the classical Gaussian central limit theorem holds. Of course, it would be of great interest to extend Theorem \ref{thm2} to include the case of odd spin interactions and, if possible, obtain the rate of convergence in (\ref{CLTeq}).

\section{Proof of Theorem \ref{thm1}}
	
The upper bound is a standard application of the Poincar\'e inequality, so only the lower bound requires proof. We will start with a version of Chatterjee's representation for the variance and some of its properties, Lemmas 3.4 and 3.5 in \cite{Chatt08}.

	Let $z, z^1$ and $z^2$ be independent standard Gaussian vectors on $\Reals^n$ and, for $t\in [0,1]$, define
	\begin{equation}
	z^1(t)=\sqrt{t}z+\sqrt{1-t}z_1\,\,\mbox{and}\,\,z^2(t)=\sqrt{t}z+\sqrt{1-t}z_2.
	\end{equation}
	For a function $f:\Reals^n\to\Reals$ such that $\e f(z)^2<\infty$, let
	\begin{align}\label{lem1:eq1}
	\varphi(t)=\e f\bigl(z^1(t)\bigr)f\bigl(z^2(t)\bigr).
	\end{align}
	This quantity is, clearly, nonnegative since, by symmetry,
	\begin{equation}
	\varphi(t) = \e\bigl(\e_1 f\bigl(z^1(t) \bigr)\bigr)^2\geq 0,
	\label{phipos}
	\end{equation}
	where $\e_1$ is the expectation with respect to $z_1$. If all partial derivatives $\partial_i f$ of $f$ are of moderate growth then, taking derivative and using Gaussian integration by parts,
	\begin{align}\label{lem1:eq2}
	\varphi'(t)&=\sum_{j=1}^n\e\partial_{j}f\bigl(z^1(t)\bigr)\partial_{j}f\bigl(z^2(t)\bigr)\geq 0,
	\end{align}
	which is nonnegative because each term is of the form (\ref{lem1:eq1}). This means that $\varphi(t)$ is non-decreasing. Actually, this fact also holds for any $f$ such that $\e f(z)^2<\infty$ (see Lemma 3.5 in \cite{Chatt08}), but here we will deal only with nice smooth functions. Notice that, by induction, $\varphi^{(k)}(t)\geq 0$ as long as partial derivatives of order $k$ are of moderate growth. This was observed in Lemma 3.3 in \cite{Chatt09} with some important consequences.

Let us consider the Gibbs measure corresponding to the Hamiltonian (\ref{Ham}),
\begin{equation}
G_N(\sigma) = \frac{\exp H_N(\sigma)}{Z_N},
\end{equation}
and denote by $\la\,\cdot\,\ra$ the average with respect to $G_N^{\otimes \infty}$. Recall the Hamiltonian (\ref{Ham}) and let
\begin{equation}
	Y_N(\sigma)=\sum_{p\geq 2} \beta_p H_{N,p}(\sigma)+ \beta_1 \sum_{i\leq N} g_i \sigma_i
\label{HamY}
\end{equation}
be its random Gaussian part, excluding non-random external field $h$. Consider two independent copies $Y_N^1$ and $Y_N^2$ of $Y_N$ and, for $t\in[0,1]$, define two correlated copies of the Hamiltonian (\ref{Ham}),
	\begin{align}
	H_{t}^1(\sigma)&=\sqrt{t}Y_N(\sigma)+\sqrt{1-t}Y_N^1(\sigma)+h\sum_{i=1}^N\sigma_i,
	\nonumber
	\\
	H_{t}^2(\tau)&=\sqrt{t} Y_N(\tau)+\sqrt{1-t} Y_N^2(\tau)+h\sum_{i=1}^N\tau_i.
	\label{Hamcor}
	\end{align}
Let $G_{t}^1(\sigma)$ and $G_{t}^2(\tau)$ be the Gibbs measures corresponding to these Hamiltonians. Denote $\la\,\cdot\,\ra_{t}$ the Gibbs average with respect to the product measure $G_{t}^1\times G_{t}^2.$ Let us denote by
\begin{equation}
R(\sigma,\tau) = \frac{1}{N}\sum_{i=1}^N \sigma_i \tau_i
\end{equation}
the overlap between configurations $\sigma,\tau\in\{-1,+1\}^N$.
	\begin{lemma}\label{lem2}
		The following representation holds,
		\begin{align}
		\begin{split}\label{lem2:eq1}
		\mathrm{Var}(f_N)&=N\int_0^1\! \e\bigl\la\xi\bigl(R(\sigma,\tau)\bigr) \bigr\ra_{t}\, dt,
		\end{split}
		\end{align}	
		and the integrand $\e\la\xi(R(\sigma,\tau))\ra_{t}$ is nonnegative and non-decreasing in $t$.
	\end{lemma}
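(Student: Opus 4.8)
The plan is to obtain (\ref{lem2:eq1}) as an instance of the abstract interpolation identity (\ref{lem1:eq1})--(\ref{lem1:eq2}), applied to the free energy regarded as a smooth function of a finite-dimensional Gaussian vector. First I would record the covariance structure of the Gaussian part: a direct computation from (\ref{Ham}) gives $\e H_{N,p}(\sigma)H_{N,p}(\tau)=NR(\sigma,\tau)^p$ and $\e\bigl(\beta_1\sum_{i\le N}g_i\sigma_i\bigr)\bigl(\beta_1\sum_{i\le N}g_i\tau_i\bigr)=\beta_1^2NR(\sigma,\tau)$, while all cross terms vanish, so that $\e Y_N(\sigma)Y_N(\tau)=N\xi\bigl(R(\sigma,\tau)\bigr)$. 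For fixed $N$ the family $\bigl(Y_N(\sigma)\bigr)_{\sigma\in\{\pm1\}^N}$ is therefore a centered Gaussian process on a finite set, so it can be written as $Y_N(\sigma)=\la a_\sigma,z\ra$ for a standard Gaussian vector $z$ on $\Reals^n$ with $n=2^N$ (take $(a_\sigma)$ to be the columns of a square root of the covariance matrix), so that $\la a_\sigma,a_\tau\ra=N\xi(R(\sigma,\tau))$. Setting
\[
f(w):=\log\sum_{\sigma}\exp\Bigl(\la a_\sigma,w\ra+h\sum_{i\le N}\sigma_i\Bigr),\qquad w\in\Reals^n,
\]
we have $f_N=f(z)$; moreover $f$ is $C^\infty$ with all partial derivatives bounded (its gradient is a Gibbs average of the bounded vectors $a_\sigma$, and higher derivatives are Gibbs cumulants of these), and $f$ is Lipschitz, so in particular $\e f(z)^2<\infty$ and every derivative of $f$ is of moderate growth.

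Next I would run the interpolation. With $z^1(t),z^2(t)$ and $\varphi(t)=\e f(z^1(t))f(z^2(t))$ as in (\ref{lem1:eq1}), one has $\varphi(1)=\e f_N^2$ and $\varphi(0)=\bigl(\e f_N\bigr)^2$, hence $\mathrm{Var}(f_N)=\varphi(1)-\varphi(0)=\int_0^1\varphi'(t)\,dt$. The key point is that the construction (\ref{Hamcor}) is exactly this path: since $Y_N=\la a_\cdot,z\ra$, $Y_N^1=\la a_\cdot,z_1\ra$, $Y_N^2=\la a_\cdot,z_2\ra$, the free energies of $H_t^1$ and $H_t^2$ equal $f\bigl(z^1(t)\bigr)$ and $f\bigl(z^2(t)\bigr)$, with $\partial_jf\bigl(z^1(t)\bigr)=\la(a_\sigma)_j\ra_{G_t^1}$ and $\partial_jf\bigl(z^2(t)\bigr)=\la(a_\tau)_j\ra_{G_t^2}$. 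Plugging into (\ref{lem1:eq2}) and using that $\sigma,\tau$ are drawn independently from $G_t^1,G_t^2$ given the disorder,
\[
\varphi'(t)=\sum_{j=1}^n\e\,\bigl\la(a_\sigma)_j\bigr\ra_{G_t^1}\bigl\la(a_\tau)_j\bigr\ra_{G_t^2}=\e\Bigl\la\sum_{j=1}^n(a_\sigma)_j(a_\tau)_j\Bigr\ra_t=N\,\e\bigl\la\xi\bigl(R(\sigma,\tau)\bigr)\bigr\ra_t,
\]
and integrating over $t\in[0,1]$ yields (\ref{lem2:eq1}).

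For the nonnegativity and monotonicity of the integrand I would iterate the observation behind (\ref{phipos})--(\ref{lem1:eq2}). Each summand $\e\,\partial_jf\bigl(z^1(t)\bigr)\partial_jf\bigl(z^2(t)\bigr)$ of $\varphi'(t)$ has exactly the form (\ref{lem1:eq1}) with $f$ replaced by $\partial_jf$, hence is nonnegative by (\ref{phipos}); summing over $j$ gives $\e\bigl\la\xi(R(\sigma,\tau))\bigr\ra_t=\varphi'(t)/N\ge0$. Differentiating once more — legitimate since the second-order derivatives of $f$ are bounded, hence of moderate growth — and applying (\ref{lem1:eq2}) again, $\varphi''(t)=\sum_{j,k}\e\,\partial_{jk}f\bigl(z^1(t)\bigr)\partial_{jk}f\bigl(z^2(t)\bigr)\ge0$ by the same reasoning, so $\frac{d}{dt}\e\bigl\la\xi(R(\sigma,\tau))\bigr\ra_t=\varphi''(t)/N\ge0$ and the integrand is non-decreasing.

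No genuine difficulty arises; this is the standard Gaussian smart-path interpolation. The only steps needing a little care are (i) the dictionary between the abstract function $f$ on $\Reals^n$ and the Gibbs language — in particular moving $\sum_j(a_\sigma)_j(a_\tau)_j$ inside the average $\la\cdot\ra_t$, which is valid precisely because $G_t^1$ and $G_t^2$ are independent given the disorder; and (ii) checking the hypotheses for differentiating under the expectation and for Gaussian integration by parts, which reduces to the boundedness of $f$ and of its derivatives noted above, together with the fact that $Y_N$, being an $L^2$- and a.s.-convergent series of independent Gaussians, is genuinely a finite-dimensional Gaussian process for each fixed $N$, so the reduction to $\Reals^n$ is harmless.
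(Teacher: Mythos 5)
Your proposal is correct and takes essentially the same route as the paper: the Gaussian smart-path interpolation with $\varphi(1)-\varphi(0)=\mathrm{Var}(f_N)$ and $\varphi'(t)=N\e\la\xi(R(\sigma,\tau))\ra_t$ obtained by Gaussian integration by parts. The only cosmetic difference is in the monotonicity step, where you differentiate once more to get $\varphi''(t)=\sum_{j,k}\e\,\partial_{jk}f(z^1(t))\partial_{jk}f(z^2(t))\ge 0$, whereas the paper expands $\xi(R)$ into monomials and applies the nonnegativity and monotonicity of \eqref{lem1:eq1} to each bounded spin-product Gibbs average $\la\sigma_{i_1}\cdots\sigma_{i_p}\ra$; both are instances of the same ``all derivatives of $\varphi$ are nonnegative'' observation recorded just before the lemma, and your handling of the infinitely many disorder variables via the finite-dimensional representation $Y_N(\sigma)=\la a_\sigma,z\ra$ is an acceptable substitute for the paper's truncation argument.
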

	
	\begin{proof}
		Let us consider the following function of $t\in [0,1]$,
		\begin{align*}
		\varphi(t)&= \e \Bigl(\log \sum_{\sigma}\exp H_{t}^1(\sigma)\Bigr)\Bigl(\log \sum_{\tau}\exp H_{t}^2(\tau)\Bigr).
		\end{align*}
		It is easy to see from the definition (\ref{Hamcor}) that
		$$\varphi(1)=\e f_N^2 \,\,\mbox{ and }\,\, \varphi(0)=(\e f_N)^2.$$
		Using Gaussian integration by parts, one can check that
		$$
		\varphi'(t)=N\e\bigl\la\xi\bigl(R(\sigma,\tau)\bigr)\bigr\ra_{t}
		$$ 
		and \eqref{lem2:eq1} follows. The reason this derivative is nonnegative and non-decreasing is because, for any $p\geq 1$,
\begin{align*}
& N \e\bigl\la \bigl(R(\sigma,\tau)\bigr)^p\bigr\ra_{t} 
=
\sum_{i_1,\ldots, i_p} \e\bigl\la \sigma_{i_1}\tau_{i_1}\cdots \sigma_{i_p}\tau_{i_p}\bigr\ra_{t} 
=
\sum_{i_1,\ldots, i_p} \e\bigl\la \sigma_{i_1}\cdots \sigma_{i_p}\bigr\ra_{1} \bigl\la \tau_{i_1}\cdots \tau_{i_p}\bigr\ra_{2}
\end{align*}
(where $\la\,\cdot\,\ra_1$ and $\la\,\cdot\,\ra_2$ denote the Gibbs averages with respect to $G_{t}^1(\sigma)$ and $G_{t}^2(\tau)$) and each term is of the form (\ref{lem1:eq1}).  Even though these are now functions of possibly infinitely many i.i.d. Gaussians $g_{i_1,\ldots, i_p}$, we can approximate by functions of finitely many Gaussians by truncating the series in (\ref{Ham}) at large finite $p$.
	\end{proof}

\medskip
Lemma~\ref{lem2} implies that, for any $t<1$,
\begin{equation}
\mathrm{Var}(f_N) \geq N(1-t)\e\bigl\la\xi\bigl(R(\sigma,\tau)\bigr) \bigr\ra_{t}.
\label{eq22}
\end{equation}
One can now use the disorder chaos results in \cite{Chen12} (or \cite{Chen15} for the spherical model), to show that $R(\sigma,\tau)$ concentrates on a constant value $u_t$ under $\la\,\cdot\,\ra_t$ and check that $u_t> 0$ when $h^2+\beta_1^2 \not =0.$ The results of \cite{Chen12} only apply to even $p$-spin models and will be used in the next section but here we will instead give a simpler approach which will also apply to general mixed $p$-spin models that include odd $p$-spin interactions.

First, we will fix any $t\in(0,1)$ in \eqref{eq22} and will freeze the interpolation of the external field terms in \eqref{Hamcor} at time $t$, while at the same time continuing the interpolation of $p$-spin interaction terms using another parameter $s\in [0,t].$ More precisely, we will break the Hamiltonian in (\ref{HamY}) into two components,
\begin{equation}
X_N(\sigma)=\sum_{p\geq 2} \beta_p H_{N,p}(\sigma),\,\,
Z_N(\sigma) = \beta_1 \sum_{i\leq N} g_i \sigma_i,
\end{equation}
consider their independent copies $X_N^1, Z_N^1$ and $X_N^2, Z_N^2$ and, for $s\in[0,t]$, define
	\begin{align}
\hat{H}_{s}^1(\sigma)&=
\sqrt{s}X_N(\sigma)+\sqrt{1-s}X_N^1(\sigma) +
\sqrt{t}Z_N(\sigma)+\sqrt{1-t}Z_N^1(\sigma)+h\sum_{i=1}^N\sigma_i,
	\nonumber
	\\
\hat{H}_{s}^2(\tau)&=\sqrt{s} X_N(\tau)+\sqrt{1-s} X_N^2(\tau)
+ \sqrt{t} Z_N(\tau)+\sqrt{1-t} Z_N^2(\tau)+h\sum_{i=1}^N\tau_i.
	\label{Hamcor2}
	\end{align}
Let $G_{t,s}^1(\sigma)$ and $G_{t,s}^2(\tau)$ be the Gibbs measures corresponding to these Hamiltonians and let $\la\,\cdot\,\ra_{t,s}$ be the Gibbs average with respect to the product measure $G_{t,s}^1\times G_{t,s}^2.$ Clearly, for $s=t$ this coincides with the previous definition, $\hat{H}_{t}^\ell= {H}_{t}^\ell$ for $\ell=1,2$ in distribution, and 
$$
\e\bigl\la\xi\bigl(R(\sigma,\tau)\bigr) \bigr\ra_{t}
=
\e\bigl\la\xi\bigl(R(\sigma,\tau)\bigr) \bigr\ra_{t,t}.
$$ 
On the other hand, the function $s\to \e\la\xi(R(\sigma,\tau)) \ra_{t,s}$ is still nonnegative and non-decreasing, because the calculations and symmetry considerations in the equations \eqref{lem1:eq1}--\eqref{lem1:eq2} apply to each term $\e\la \sigma_{i_1}\tau_{i_1}\cdots \sigma_{i_p}\tau_{i_p}\ra_{t,s}$. Together with \eqref{eq22}, this yields
\begin{equation}
\mathrm{Var}(f_N) \geq N(1-t)\e\bigl\la\xi\bigl(R(\sigma,\tau)\bigr) \bigr\ra_{t,0}.
\label{eq23}
\end{equation}
If we denote, for $\ell=1,2$ and $i\leq N$,
\begin{equation}
h_i^\ell=h+\beta_1(\sqrt{t}g_i + \sqrt{1-t}g_i^\ell),
\label{hscort}
\end{equation}
then we can rewrite
$$
\hat{H}_0^1(\sigma)=X_N^1(\sigma)+\sum_{i=1}^Nh_i^1\sigma_i
\,\,\mbox{ and }\,\,
\hat{H}_0^2(\tau)=X_N^2(\sigma)+\sum_{i=1}^Nh_i^2\tau_i.
$$
We will now show that, under $\la\,\cdot\,\ra_{t,0}$, the overlap $R(\sigma,\tau)$ concentrates near some constant $u$, which will be strictly positive when $h^2 + \beta_1^2 \not = 0$, finishing the proof of Theorem \ref{thm1}.

Let us recall the definition of the Parisi measure $\mu_P$ and the function $\Phi_{\mu_P}$ in (\ref{Parisif}) and (\ref{ParisiEq}), recall the notation in (\ref{hscort}) and define
\begin{equation}
u=\e\partial_x\Phi_{\mu_P}(0,h_1^1)\partial_x\Phi_{\mu_P}(0,h_1^2).
\label{defu}
\end{equation}
The following holds.
\begin{proposition}\label{prop}
For any $\varepsilon>0,$ there exists $K>0$ independent of $N$ such that
\begin{align*}
\e\bigl\la I\bigl(|R(\sigma,\tau)-u|\geq \varepsilon \bigr) \bigr\ra_{t,0}\leq Ke^{-N/K}.
\end{align*}
\end{proposition}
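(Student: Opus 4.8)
The plan is to read off the concentration from a Guerra-type upper bound on a coupled two-replica free energy, together with the exponential concentration of free energies. For $\lambda\in\Reals$ set
\[
\mathcal{Z}_N(\lambda):=\sum_{\sigma,\tau}\exp\Bigl(\hat H_0^1(\sigma)+\hat H_0^2(\tau)+\lambda\sum_{i\leq N}\sigma_i\tau_i\Bigr),\qquad \phi_N(\lambda):=\frac1N\,\e\log\mathcal{Z}_N(\lambda).
\]
Then $\phi_N$ is convex, $\phi_N'(0)=\e\bigl\la R(\sigma,\tau)\bigr\ra_{t,0}$, and $\phi_N(0)=\e F_N^1+\e F_N^2$ with $F_N^\ell:=\frac1N\log\sum_\sigma e^{\hat H_0^\ell(\sigma)}$. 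Each $\hat H_0^\ell$ is, in law, a mixed $p$-spin Hamiltonian whose external field has the distribution of $h+\beta_1 g_1$ (by \eqref{hscort} the $h_i^\ell$ are i.i.d.\ over $i$ with this law), so the validity of the Parisi formula \eqref{Parisif} gives $\phi_N(0)\to 2\mathcal{P}$, where $\mathcal{P}$ is the minimum in \eqref{Parisif}. I would then prove that, for $\lambda$ in a neighbourhood of $0$ of either sign,
\[
\limsup_{N\to\infty}\bigl(\phi_N(\lambda)-\phi_N(0)\bigr)\;\leq\;u\lambda+o(\lambda),
\]
with $u$ as in \eqref{defu}, and finally convert this — via exponential Chebyshev — into the concentration of $R(\sigma,\tau)$ near $u$.

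For the displayed bound I would run Guerra's interpolation. Since the interaction parts $X_N^1,X_N^2$ are independent, one may use two \emph{independent} Ruelle probability cascades, one per replica, keep $(h_i^1,h_i^2)$ as a fixed i.i.d.\ external field, and place the deterministic term $\lambda\sum_i\sigma_i\tau_i$ into the last–level local partition function. This yields, for all probability measures $\mu_1,\mu_2$ on $[0,1]$,
\[
\phi_N(\lambda)\;\leq\;\mathcal{G}(\lambda;\mu_1,\mu_2):=\e\,\Psi^{\lambda}(0,h_1^1,h_1^2)-\frac12\int_0^1\!\xi''(q)\,q\,\mu_1([0,q])\,dq-\frac12\int_0^1\!\xi''(q)\,q\,\mu_2([0,q])\,dq,
\]
where $\Psi^{\lambda}(q,x_1,x_2)$ solves
\[
\partial_q\Psi^{\lambda}=-\frac{\xi''(q)}{2}\Bigl(\partial_{x_1x_1}\Psi^{\lambda}+\partial_{x_2x_2}\Psi^{\lambda}+\mu_1([0,q])\bigl(\partial_{x_1}\Psi^{\lambda}\bigr)^2+\mu_2([0,q])\bigl(\partial_{x_2}\Psi^{\lambda}\bigr)^2\Bigr)
\]
with boundary condition $\Psi^{\lambda}(1,x_1,x_2)=\log\sum_{\sigma,\tau=\pm1}\exp(x_1\sigma+x_2\tau+\lambda\sigma\tau)$; there is no $\partial_{x_1x_2}$ cross–term precisely because the two cascades are independent, which is consistent with $X_N^1\perp X_N^2$. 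At $\lambda=0$ the boundary condition splits, $\Psi^{0}(1,x_1,x_2)=2\log2+\log\cosh x_1+\log\cosh x_2$, hence $\Psi^{0}(q,x_1,x_2)=2\log2+\Phi_{\mu_1}(q,x_1)+\Phi_{\mu_2}(q,x_2)$; taking $\mu_1=\mu_2=\mu_P$ and comparing with \eqref{Parisif} gives $\mathcal{G}(0;\mu_P,\mu_P)=2\mathcal{P}=\lim_N\phi_N(0)$, so the bound is tight at $\lambda=0$.

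The remaining point is to compute $\partial_\lambda\mathcal{G}(\lambda;\mu_P,\mu_P)$ at $\lambda=0$. Differentiating the boundary condition, $\partial_\lambda\Psi^{\lambda}(1,x_1,x_2)\big|_{\lambda=0}=\tanh x_1\tanh x_2=\partial_x\Phi_{\mu_P}(1,x_1)\,\partial_x\Phi_{\mu_P}(1,x_2)$, and a direct computation with the Parisi equation \eqref{ParisiEq} for $\Phi_{\mu_P}$ shows that $(q,x_1,x_2)\mapsto\partial_x\Phi_{\mu_P}(q,x_1)\,\partial_x\Phi_{\mu_P}(q,x_2)$ solves the equation obtained by linearising the $\Psi^{\lambda}$–equation about the $\lambda=0$ solution. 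By uniqueness, $\partial_\lambda\Psi^{\lambda}(0,x_1,x_2)\big|_{\lambda=0}=\partial_x\Phi_{\mu_P}(0,x_1)\,\partial_x\Phi_{\mu_P}(0,x_2)$, whence $\partial_\lambda\mathcal{G}(\lambda;\mu_P,\mu_P)\big|_{\lambda=0}=\e\,\partial_x\Phi_{\mu_P}(0,h_1^1)\,\partial_x\Phi_{\mu_P}(0,h_1^2)=u$. Since $\mathcal{G}(\,\cdot\,;\mu_P,\mu_P)$ is differentiable at $0$, combining with the previous paragraph gives $\phi_N(\lambda)-\phi_N(0)\leq\mathcal{G}(\lambda;\mu_P,\mu_P)-\phi_N(0)=u\lambda+o(\lambda)+o_N(1)$ for $\lambda$ near $0$.

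To finish, note that $\frac1N\log\mathcal{Z}_N(\lambda)$, $F_N^1$, $F_N^2$ are each $O(N^{-1/2})$–Lipschitz in the Gaussian disorder, hence within $\delta$ of their means outside a set of probability at most $e^{-N\delta^2/K_0}$. For $\lambda>0$ and any $\eps>0$, exponential Chebyshev gives
\[
\bigl\la I\bigl(R(\sigma,\tau)\geq u+\eps\bigr)\bigr\ra_{t,0}\;\leq\;\exp\Bigl(N\bigl(-\lambda(u+\eps)+\tfrac1N\log\mathcal{Z}_N(\lambda)-F_N^1-F_N^2\bigr)\Bigr),
\]
and on the good event the exponent is at most $N\bigl(-\lambda(u+\eps)+\phi_N(\lambda)-\phi_N(0)+3\delta\bigr)\leq N\bigl(-\lambda\eps+o(\lambda)+o_N(1)+3\delta\bigr)$; choosing first $\lambda$ and then $\delta$ small (with $\delta$ proportional to $\lambda\eps$) makes this at most $-cN$ for large $N$, while the bad event has probability $e^{-cN}$, so $\e\bigl\la I(R\geq u+\eps)\bigr\ra_{t,0}\leq Ke^{-N/K}$. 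The tail $R\leq u-\eps$ is handled identically with $\lambda<0$. The step I expect to require the most care is the precise implementation of the two–replica Guerra bound and the verification that $\partial_x\Phi_{\mu_P}(q,x_1)\,\partial_x\Phi_{\mu_P}(q,x_2)$ solves the linearised equation — it is this identity that pins down the limiting value in the form \eqref{defu}.
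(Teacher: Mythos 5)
Your route is essentially the paper's: tilt the coupled two-replica free energy by $\lambda\sum_{i}\sigma_i\tau_i$, bound it above by a two-system Guerra bound built from two independent cascades with the external fields $(h_i^1,h_i^2)$ frozen, observe that at $\lambda=0$ the bound collapses to twice the Parisi formula (so it is asymptotically tight), identify the $\lambda$-derivative at $0$ as $\e\,\partial_x\Phi_{\mu_P}(0,h_1^1)\,\partial_x\Phi_{\mu_P}(0,h_1^2)=u$, and convert the resulting strict deficit into exponential concentration. Your verification of the derivative identity by checking that $\partial_x\Phi_{\mu_P}(q,x_1)\,\partial_x\Phi_{\mu_P}(q,x_2)$ solves the linearised equation with boundary datum $\tanh x_1\tanh x_2$ is precisely the identity the paper imports from \cite{Chen150} (you would still need to justify differentiability in $\lambda$ and uniqueness for general, non-discrete $\mu_P$, e.g.\ by first treating discrete $\mu$ via cascades and approximating), and your exponential Chebyshev on a good event is an equivalent rewriting of the paper's comparison of the restricted free energies $\hat F_{N,\varepsilon}^{\pm}$ with $\hat F_N^{\pm}(\lambda)-\lambda(\varepsilon\pm u)$ followed by Gaussian concentration. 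Those parts are fine.

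The genuine gap is the step ``this yields, for all probability measures $\mu_1,\mu_2$\dots''. The Proposition is stated, and needed, for general mixed $p$-spin models including odd $p\geq 3$ interactions; indeed the whole purpose of this part of the argument is to avoid the evenness assumption required by the chaos results of \cite{Chen12}. But the plain Guerra/cascade interpolation does not deliver the claimed upper bound in that generality: the remainder produced by Gaussian integration by parts is a sum of Bregman-type terms in which the $p\geq 2$ part of $\xi$ is evaluated at cross-overlaps of configurations, and these overlaps can be negative, so the remainder has a sign only when that mixture is convex on $[-1,1]$, i.e.\ when all interactions with $p\geq 3$ are even. To obtain the bound \eqref{eq0} for general mixtures one must, as in \cite{Tal03}, add a small perturbation of the Hamiltonian and invoke the Ghirlanda--Guerra identities to force overlap positivity, and check that this perturbation is compatible with the coupling term $\pm\lambda N R(\sigma,\tau)$; the paper explicitly flags and relies on this. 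As written, your argument proves the Proposition only for even mixtures, so you should either incorporate this positivity argument or restrict the claim accordingly.
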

To see that this finishes the proof of Theorem \ref{thm1}, we rewrite
$$
u = \e\Bigl(\e_1\partial_x\Phi_{\mu_P}\bigl(0,h+\beta_1\sqrt{t}g_1 + \beta_1\sqrt{1-t}g_1^1)\bigr)\Bigr)^2,
$$
where $\e_1$ is the expectation with respect to $g_1^1$. As we mentioned in the introduction, it is well known that for any $q\in [0,1]$, $\Phi_{\mu}(q,\,\cdot\,)$ is symmetric and strictly convex. Therefore, $\partial_x\Phi_{\mu_P}(0,\,\cdot\,)$ is odd and strictly increasing and the expectation $\e_1$ inside the square is not zero whenever $h+\beta_1\sqrt{t}g_1 \not =0$, which happens with probability one when $h^2 + \beta_1^2 \not =0$ and $t>0$.

\begin{proof}[Proof of Proposition \ref{prop}.]
For any $\eps>0$, we define
\begin{align*}
\hat{F}_{N,\varepsilon}^+&=\frac{1}{N}\log \sum_{R(\sigma,\tau)>\varepsilon+u}\exp\bigl(\hat{H}_0^1(\sigma)+\hat{H}_0^2(\tau) \bigr),\\
\hat{F}_{N,\varepsilon}^-&=\frac{1}{N}\log \sum_{R(\sigma,\tau)< -\varepsilon+u}\exp\bigl(\hat{H}_0^1(\sigma)+\hat{H}_0^2(\tau)\bigr).
\end{align*}
Note that, for any $\lambda\geq 0,$
		\begin{align*}
		\e \hat{F}_{N,\varepsilon}^+&\leq \frac{1}{N}\e\log \sum_{R(\sigma,\tau)>\varepsilon+u}\exp \bigl(\hat{H}_0^1(\sigma)+\hat{H}_0^2(\tau)+\lambda N(R(\sigma,\tau)-(\varepsilon+u))\bigr)\\
		&\leq \frac{1}{N}\e\log \sum_{\sigma,\tau}\exp\bigl(\hat{H}_0^1(\sigma)+\hat{H}_0^2(\tau)+\lambda NR(\sigma,\tau)\bigr)-\lambda(\varepsilon+u)
		\end{align*}
		and
		\begin{align*}
		\e \hat{F}_{N,\varepsilon}^-&\leq \frac{1}{N}\e\log \sum_{R(\sigma,\tau)<-\varepsilon+u}\exp \bigl(\hat{H}_0^1(\sigma)+\hat{H}_0^2(\tau)+\lambda N((-\varepsilon+u)-R(\sigma,\tau))\bigr)\\
		&\leq \frac{1}{N}\e\log \sum_{\sigma,\tau}\exp\bigl(\hat{H}_0^1(\sigma)+\hat{H}_0^2(\tau)-\lambda NR(\sigma,\tau)\bigr)-\lambda(\varepsilon-u).
		\end{align*}
If we denote the first terms on the right hand side by
$$
\hat{F}_{N}^{\pm}(\lambda) = 
\frac{1}{N}\e\log \sum_{\sigma,\tau}\exp\Bigl(X_N^1(\sigma)+X_N^2(\tau)
+\sum_{i=1}^N h_i^1\sigma_i + \sum_{i=1}^N h_i^2\tau_i 
\pm\lambda \sum_{i=1}^N \sigma_i\tau_i\Bigr)
$$
then we have shown that, for any $\lambda\geq 0,$ 
\begin{align}
\label{eq-1}
\e \hat{F}_{N,\varepsilon}^{\pm} \leq \e \hat{F}_{N}^{\pm}(\lambda) - \lambda(\eps\pm u).
\end{align}
The key observations now is that, since $X_N^1$ and $X_N^2$ are independent, one can run two independent copies of the Guerra's replica symmetry breaking scheme \cite{G03} with the same order parameter $\mu$ to obtain the following upper bound,
\begin{equation}
\label{eq0}
\e \hat{F}_{N}^{\pm}(\lambda)
\leq 
2\log 2+\e \Psi_\mu(\pm\lambda,0,h_1^1,h_1^2)-\int_0^1\! \xi''(q)q\mu([0,q])\, dq,
\end{equation}
where $\Psi_{\mu}(\lambda,q,x_1,x_2)$ satisfies
\begin{align*}
\partial_q\Psi_{\mu}&=-\frac{\xi''(q)}{2}\Bigl(\partial_{x_1}^2\Psi_{\mu}+\partial_{x_2}^2\Psi_{\mu}+\mu([0,q])\bigl((\partial_{x_1}\Psi_{\mu})^2+(\partial_{x_2}\Psi_{\mu})^2\bigr)\Bigr)
\end{align*}
for $(\lambda,q,x_1,x_2)\in\mathbb{R}\times[0,1]\times\mathbb{R}^3$ with the boundary condition 
$$
\Psi_{\mu}(\lambda,1,x_1,x_2)=\log\bigl(\cosh x_1\cosh x_2\cosh \lambda+\sinh x_1\sinh x_2\sinh\lambda\bigr).
$$
Of course, this boundary condition comes from the identity
$$
\frac{1}{4}\sum_{\eps_1,\eps_2 = \pm 1}
\exp\bigl(\eps_1 x_1 + \eps_2 x_2 + \lambda \eps_1 \eps_2\bigr)
=
\cosh x_1\cosh x_2\cosh \lambda+\sinh x_1\sinh x_2\sinh\lambda.
$$
This type of calculation is completely standard (for analogous computations see e.g. Theorem 15.7.3 in \cite{Tal11}) and the upper bound can be first proved for discrete distributions $\mu$, in which case it can be expressed either via explicit recursive definition as in the original work of Guerra \cite{G03} or via Ruelle probability cascades as, for example, in Chapter 3 in \cite{SKmodel}. For general non-discrete $\mu$ this definition can be extended by approximation, and we give a representation via the above differential equation only for convenience of notations and refer to \cite{Chen150} for details. One important remark is that, as in the case of one system, the original proof of Guerra \cite{G03} will work only for even $p$-spin model. However, as was noticed by Talagrand in \cite{Tal03}, one can obtain the same bound in the general case by introducing a small perturbation of the Hamiltonian and utilizing the Ghirlanda-Guerra identities \cite{Guerra95, Guerra96} (see e.g. Theorem 14.4.4. in \cite{Tal11} or Theorem 3.5 in \cite{SKmodel}). Exactly the same perturbation will work for the above system coupled by the term $\pm\lambda NR(\sigma,\tau)$ to give the upper bound (\ref{eq0}) for the general mixed $p$-spin model.

Finally, we use the bound (\ref{eq0}) as follows. First, it is obvious that
\begin{align*}
\Psi_{\mu}(0,0,x_1,x_2)&=\Phi_{\mu}(0,x_1)+\Phi_{\mu}(0,x_2)
\end{align*}
and, therefore, $\e \Psi_{\mu}(0,0,h_1^1,h_1^2)=2\e\Phi_{\mu}(0,h_1)$ so, for $\mu = \mu_P$, the right hand side of \eqref{eq0} is twice the right hand side of (\ref{Parisif}). By Lemma 4 in \cite{Chen150},
$$
\partial_\lambda\Psi_{\mu}(0,0,x_1,x_2)=\partial_{x_1}\Phi_{\mu}(0,x_1)\partial_{x_2}\Phi_{\mu}(0,x_2)
$$
(also, one can easily check this first for discrete $\mu$ using the representation in terms of Ruelle probability cascades, as in Chapter 3 in \cite{SKmodel}, and then extend  to general $\mu$ by approximation), which implies that
\begin{align*}
\partial_\lambda\Bigl(\e\Psi_{\mu}(\pm\lambda,0,h_1^1,h_1^2)-\lambda(\varepsilon\pm u)\Bigr)\Big|_{\lambda=0}&
=
\pm \e \partial_{x}\Phi_{\mu}(0,h_1^1)\partial_{x}\Phi_{\mu}(0,h_1^2)
-(\varepsilon\pm u).
\end{align*}
Recalling the definition of $u$ in \eqref{defu}, this derivative equals $-\eps$ for $\mu=\mu_P.$ As a result, setting $\mu=\mu_P$ and choosing $\lambda>0$ sufficiently small in (\ref{eq-1}) and (\ref{eq0}), we get strict inequality
\begin{align*}
\limsup_{N\rightarrow\infty}\e\hat{F}_{N,\varepsilon}^{\pm}&<2\lim_{N\rightarrow\infty}\e F_N.
\end{align*}
Applying Gaussian concentration inequalities to $\hat{F}_{N,\varepsilon}^{\pm}$ and $F_N$ finishes the proof.
\end{proof}

\section{Proof of Theorem \ref{thm2}}

Throughout this section, we assume that $\beta_{p}=0$ for all odd $p\geq 3$ and $h^2+\beta_1^2\neq 0.$ Our approach is based on Stein's method (see~\cite{CGS}) of normal approximation, which essentially utilizes the idea that if a random variable $W$ approximately satisfies $\e W\psi(W)\approx\e\psi'(W)$ for a large class of functions $\psi$, then $W$ is approximately standard Gaussian. Here we mention that when $W$ is standard Gaussian, $\e W\psi(W)=\e \psi'(W)$ for all absolutely continuous function $\psi$ for which both expectations are well-defined. More precisely, Stein's lemma~\cite[page 25]{stein}) says that for a standard Gaussian random variable $g$ and any random variable $W$, 
	\begin{align}
	\label{steinlem}
	d_{\mathrm{TV}}(W,g)\le \sup\bigl\{|\e(W\psi(W)-\psi'(W))| : \|\psi'\|_\infty\le 2 \bigr\}.
	\end{align}
Now suppose that, for some $W$, there exists a function $f$ such that
\[
\e W\psi(W)=\e f(W)\psi'(W) 
\]
for all absolutely continuous functions $\psi.$ If $f(W)$ is concentrated at $1$, then we can conclude that $\e W\psi(W)\approx\e\psi'(W)$ and it would follow by Stein's method that the distribution of $W$ is approximately standard Gaussian. This approach has been used in~\cite{cha09} to prove second order Poincar\'e inequalities and is the main ingredient in our proof of Theorem~\ref{thm2}. 

Another crucial ingredient is played by the recent result on disorder chaos in the mixed even $p$-spin model with external field (see \cite[Theorem 1]{Chen12}), which states that, for any $0<t<1$ and any $\varepsilon>0$, there exists some $K>0$ such that
\begin{align}\label{eq2}
\e\bigl\la I\bigl(|R(\sigma,\tau)-u_t|\geq \varepsilon \bigr)\bigr\ra_t\leq Ke^{-N/K},
\end{align}
where the constant $u_t$ was defined in the introduction as the unique fixed point of (\ref{defphi}) on $[0,d]$. A result similar to~\eqref{eq2} is missing for the mixed odd-spin model  to complete the proof of the central limit theorem. We remark that, even though the results in~\cite{Chen12} were stated and proved for the two systems with the same external field, the argument works without any changes in the present setting of correlated external fields.

In order to make a connection between Stein's method and the disorder chaos, we need a generalization of \eqref{lem2:eq1} for the covariance of the functions of Gaussian vectors. Let $y,y_1$ and $y_2$ be independent centered Gaussian vectors on $\Reals^n$ with the covariance matrix $C=(C_{j,j'})$ and, for $0\leq t\leq 1,$ define
\begin{align*}
y^1(t)=\sqrt{t}y+\sqrt{1-t}y_1\,\,\mbox{ and }\,\,y^2(t)=\sqrt{t}y+\sqrt{1-t}y_2.
\end{align*}
Let $A,B:\mathbb{R}^n\rightarrow \Reals$ be absolutely continuous functions such that
$$
\e\|\nabla A(y)\|_2^2<\infty \,\,\mbox{ and }\,\, \e\|\nabla B(y)\|_2^2 <\infty.
$$ Using the Gaussian integration by parts, one can easily check that 
\begin{align}
\label{cov}
\e A(y)B(y)-\e A(y) \e B(y)&=\int_0^1\! \sum_{j,j'=1}^nC_{j,j'}\e\partial_{j}A(y^1(t))\partial_{j'}B(y^2(t))\,dt.
\end{align}	
Recall the definition of $\nu$ in (\ref{eq4}) and define 
$$
W_N=\frac{f_N-\e f_N}{\sqrt{\nu N}}.
$$
Let $g$ be a standard Gaussian r.v.~on $\mathbb{R}$ and $\psi$ be any absolutely continuous function on $\mathbb{R}$ with $\|\psi'\|_\infty\le 2$. We now apply \eqref{cov} with $n=2^N$, $y=Y_N$ defined in (\ref{HamY}) and functions $A=W_N$ and $B=\psi(W_N).$ Recall the definition of correlated copies $H_t^1, H_t^2$ of the Hamiltonian $H_N$ and the Gibbs average $\la\,\cdot\,\ra_t$ in (\ref{Hamcor}). Since $\e W_N=0$, one can check that~\eqref{cov} becomes
\begin{align*}
&\e W_N\psi(W_N)=\frac{1}{\nu}\int_0^1\! \e \psi'(W_{N,t}^1)\bigl\la \xi(R(\sigma,\tau))\bigr\ra_t\,dt,
\end{align*}
where $W_{N,t}^1$ is defined by replacing $H_N$ in $W_N$ by $H_t^1.$ Since $\e\psi'(W_{N,t}^1)=\e\psi'(W_{N})$ for all $t$, the definition of $\nu$ in (\ref{eq4}) implies that
\begin{align}\label{eq3}
\e W_N\psi(W_N)-\e\psi'(W_N)&=\frac{1}{\nu}\int_0^1\! \e\psi'(W_{N,t}^1)\bigl\la \xi(R(\sigma,\tau))-\xi(u_t)\bigr\ra_t\, dt
\end{align}
and, therefore, for any $0<\delta<1/2,$
\begin{align*}
\bigl|\e W_N\psi(W_N)-\e\psi'(W_N)\bigr|&\leq \frac{\|\psi'\|_\infty}{\nu}\int_\delta^{1-\delta}\! \e\bigl\la |\xi(R(\sigma,\tau))-\xi(u_t)| \bigr\ra_t\,dt+\frac{4\delta\xi(1)\|\psi'\|_\infty}{\nu}.
\end{align*}
By Stein's lemma, \eqref{steinlem}, we showed that
\begin{align*}
d_{\mathrm{TV}}(W_{N},g)&\leq \frac{2}{\nu}\int_\delta^{1-\delta}\! \e\bigl\la |\xi(R(\sigma,\tau))-\xi(u_t)|\bigr\ra_t\,dt+\frac{8\delta\xi(1)}{\nu}.
\end{align*}
Using the disorder chaos result in \eqref{eq2} for $t\in[\delta,1-\delta]$ yields
\begin{align*}
\limsup_{N\rightarrow\infty} d_{\text{TV}}(W_{N},g) \leq \frac{8\delta\xi(1)}{\nu}
\end{align*}
and, letting $\delta\downarrow 0$ finishes the proof.
\qed


\begin{thebibliography}{99}


\bibitem{ALR} Aizenman, M.,  Lebowitz, J.L., Ruelle, D.: Some rigorous results on the Sherrington-Kirkpatrick spin glass model. Commun. Math. Phys. \textbf{112}, 3--20 (1987) 

\bibitem{AChen13} Auffinger, A., Chen, W.-K.: On properties of Parisi measures. Probab. Theory Related Fields {\bf 161}, no. 3, 817--850 (2013)

\bibitem{AChen14} Auffinger, A., Chen, W.-K.: The Parisi formula has a unique minimizer. Comm. Math. Phys. {\bf 335}, no. 3, 1429--1444 (2014)

\bibitem{Baik15} Baik, J., Lee, J.O.: Fluctuations of the free energy of the spherical Sherrington-Kirkpatrick model. arXiv:1505.07349 (2015)

\bibitem{Chatt08} Chatterjee, S.: Chaos, concentration, and multiple valleys. arXiv:0810.4221 (2008)

\bibitem{cha09} Chatterjee, S.: Fluctuations of eigenvalues and second order {P}oincar\'e
              inequalities. Probab. Theory Related Fields \textbf{143}, no. 1-2, 1--40 (2009)

\bibitem{Chatt09} Chatterjee, S.: Disorder, chaos, and multiple valleys in spin glasses. arXiv:0907.338 (2009)

\bibitem{Chatt13} Chatterjee, S., Dembo, A., Ding, J.: On level sets of Gaussian fields. arXiv:1310.5175 (2013)

\bibitem{Chatt14} Chatterjee, S.: Superconcentration and Related Topics. Springer Monographs in Mathematics. {Springer, Berlin-Heidelberg} (2014).

\bibitem{CGS} Chen, L. H. Y., Goldstein, L., Shao, Q.-M.: Normal Approximation by Stein's Method. Probability and Its Applications. Springer, Berlin (2011)

\bibitem{Chen12} Chen, W.-K.: Disorder chaos in the Sherrington-Kirkpatrick model with external field. Ann. Probab. \textbf{41}, no. 5, 3345--3391 (2013)

\bibitem{Chen-sphere} Chen, W.-K.: The Aizenman-Sims-Starr scheme and Parisi formula for mixed $p$-spin spherical models. Electron. J. Probab. \textbf{18}, no. 94, 1--14 (2013)

\bibitem{Chen150} Chen, W.-K.: Variational representations for the Parisi functional and the two-dimensional Guerra-Talagrand bound. arXiv:1501.06635 (2015)

\bibitem{Chen15} Chen, W.-K., Hsieh, H.-W., Hwang, C.-R., Sheu, Y.-C.: Disorder chaos in the spherical mean-field model. J. Stat. Phys. \textbf{160}, no. 2, 417--429 (2015)


\bibitem{Ding13} Ding, J., Eldan, R., Zhai, A.: On multiple peaks and moderate deviations for supremum of Gaussian field. arXiv:1311.5592 (2013)

\bibitem{Guerra95} Guerra, F.: Functional order parameters for the quenched free energy in mean field spin glass models. Field Theory and Collective Phenomena, S. De Lillo, P. Sodano, F.C. Khanna, G.W. Semenov, eds, World Scientific, Singapore (1995)

\bibitem{Guerra96} Guerra, F.: About the overlap distribution in mean field spin glass models.
International Journal of Modern Physics B \textbf{10}, no. 13-14, 1675--1684 (1996)

\bibitem{G03} Guerra, F.: Broken replica symmetry bounds in the mean field spin glass model. Comm. Math. Phys. {\bf 233}, no. 1, 1--12 (2003)

\bibitem{Jagannath} Jagannath, A., Tobasco, I.: A dynamic programming approach to the Parisi functional. arXiv:1502.04398 (2015)

\bibitem{P13} Panchenko, D.: The Parisi ultrametricity conjecture. Ann. of Math. (2) \textbf{177}, no. 1, 383--393 (2013)

\bibitem{P14} Panchenko, D.: The Parisi formula for mixed $p$-spin models. Ann. Probab. \textbf{42}, no. 3, 946--958 (2014)

\bibitem{SKmodel} Panchenko, D.: The Sherrington-Kirkpatrick Model. Springer Monographs in Mathematics. Springer-Verlag, New York (2013)

\bibitem{Parisi79} Parisi, G.: Infinite number of order parameters for spin-glasses. Phys. Rev. Lett. \textbf{43}, 1754--1756 (1979)

\bibitem{Parisi80} Parisi, G.: A sequence of approximate solutions to the S-K model for spin glasses. J. Phys. A \textbf{13}, L-115 (1980) 

\bibitem{SK} Sherrington, D., Kirkpatrick, S.: Solvable model of a spin glass. Phys. Rev. Lett. {\bf 35}, 1792--1796 (1975)

\bibitem{stein} Stein, C.: Approximate computation of expectations. IMS Lecture Notes -- Monograph Series, {\bf 7} (1986)

\bibitem{Tal03} Talagrand, M.:  On Guerra's broken replica-symmetry bound.  C. R. Math. Acad. Sci. Paris {\bf 337}, no. 7, 477--480 (2003)

\bibitem{Tal06} Talagrand, M.: The Parisi formula. Ann. of Math. (2) \textbf{163}, no. 1, 221--263 (2006)

\bibitem{Tal-sphere} Talagrand, M.: Free energy of the spherical mean field model. Probab. Theory Related Fields  \textbf{134}, no. 3, 339--382 (2006) 

\bibitem{Tal11} Talagrand, M.: Mean-Field Models for Spin Glasses: Volume II. Ergebnisse der Mathematik und ihrer Grenzgebiete. 3. Folge A Series of Modern Surveys in Mathematics, Vol. 55. Springer-Verlag (2011)



\end{thebibliography}
\end{document}